\newcommand{\A}{{\mathbb {A}}}
\newcommand{\Q}{{\mathbb Q}}
\newcommand{\cC}{{\mathcal {C}}}
\newcommand{\cM}{{\mathcal {M}}}
\newcommand{\cF}{{\mathcal {F}}}
\newcommand{\cE}{{\mathcal {E}}}
\newcommand{\cL}{{\mathcal {L}}}
\newtheorem{thm}{Theorem}%[section]
\newtheorem{lemma}[thm]{Lemma}
\newtheorem{cor}[thm]{Corollary}
\newtheorem{prop}[thm]{Proposition}
\newtheorem*{thm*}{Theorem}
\newtheorem*{thmF1}{Theorem F1}
\newtheorem*{thmF2}{Theorem F2}
\theoremstyle{definition}
\newtheorem{criterion}[thm]{Criterion}
\newtheorem{exmp}[thm]{Example}
\newtheorem{notation}[thm]{Notation-Remark}
\newtheorem{defn-thm}[thm]{Definition-Theorem}  %!!!!!!!!!!!!!!!!!!!!!!!!
\newtheorem{rem}[thm]{Remark}
\theoremstyle{remark}
\renewcommand{\c}[0]{{\mathbb C}}
\renewcommand{\o}[0]{{\mathcal O}}
\newcommand{\z}[0]{{\mathbb Z}}
\newcommand{\p}[0]{{\mathbb P}}
\newcommand{\pic}[0]{\operatorname{Pic}}
\newcommand{\codim}[0]{\operatorname{codim}}
\newcommand{\Proj}[0]{\operatorname{Proj}}
\newcommand{\sym}[0]{\operatorname{Sym}}
\def\into{\DOTSB\lhook\joinrel\rightarrow}
\numberwithin{equation}{section}
\author{Carolina Araujo}
\author{Jos\'e J. Ram\'on-Mar\'i}
\address{Carolina Araujo: \sf IMPA, Estrada Dona Castorina 110, Rio de
  Janeiro, RJ 22460-320, Brazil}
\email{caraujo@impa.br}
\address{Jos\'e J. Ram\'on-Mar\'i: \sf IMPA, Estrada Dona Castorina 110, Rio de
  Janeiro, RJ 22460-320, Brazil}
\email{josemari@impa.br}
\title{Flat deformations of $\p^n$}
\begin{document}

\maketitle

\begin{abstract}
In this paper we study projective flat deformations of $\p^n$.
We prove that the singular fibers of a projective flat deformation of $\p^n$ appear either
in codimension $1$ or over singular points of the base.
We also describe projective flat deformations of $\p^n$ with smooth total space,
and discuss flatness criteria.
\end{abstract}

%%%%%%%%%%%%%%%%%%%%%%%%%%%%%%%%%%%%%%%%%%%%%%%%%%%%%%%%%
%                                                       %
%                     1)  INTRODUCTION                  %
%                                                       %
%%%%%%%%%%%%%%%%%%%%%%%%%%%%%%%%%%%%%%%%%%%%%%%%%%%%%%%%%

%\section*{}

\section{Introduction}

It is a well-known theorem of Siu that $\p^n$ is rigid  (see \cite[Main Theorem]{siu_rigidity}).
This means that, if $\pi:X\to Y$ is a smooth proper morphism between
connected complex manifolds, and if the general fiber of $\pi$ is isomorphic to $\p^n$, then
every fiber of $\pi$ is isomorphic to $\p^n$.
The aim of these notes is to prove similar results for projective \emph{flat} deformations of $\p^n$.

A $\p^n$-bundle is  a smooth projective morphism between complex analytic spaces whose
fibers are all isomorphic to  $\p^n$.
The simplest examples  are \emph{scrolls}.
These are  $\p^n$-bundles $\pi:X\to Y$ satisfying the following equivalent conditions.
\begin{enumerate}
	\item There is a locally free sheaf $\cE$ of rank $n+1$ on  $Y$, and an isomorphism $X\cong \p(\cE)$
	over $Y$.
	\item There is a line bundle $\cL$ on $X$ whose restriction to every fiber $X_t$
	satisfies $\cL|_{X_t}\cong \o_{\p^n}(1)$.
	\item The morphism $\pi$ admits a section $\sigma:Y\to X$.
\end{enumerate}
We call a line bundle $\cL$ as in (2)  a  \emph{global $\o(1)$ for  $\pi$}.
Given $\cE$ as in (1), the
tautological line bundle $\o_{\p(\cE)}(1)$ is a global $\o(1)$ for  $\pi$.
Conversely, given $\cL$ as in (2), $\cE$ can be taken to be $\pi_*\cL$.
The equivalence with (3) can be seen by considering the associated
$\p^n$-bundle of hyperplanes (see \cite[p.134]{milne}).

We recall the following characterization of scrolls, due to  Fujita.

\begin{thmF1}[{\cite[Corollary 5.4]{fujita75}}]
Let $X$ and $Y$ be irreducible and reduced complex analytic spaces, and $\pi:X\to Y$ a proper flat morphism
whose fibers are all irreducible and reduced.
Suppose that the general fiber of $\pi$ is isomorphic to $\p^n$, and that
there exists a $\pi$-ample line bundle $\cL$ on $X$ such that
$\cL|_{X_t}\cong \o_{\p^n}(1)$ for general $t\in Y$.
Then $\pi$ is a $\p^n$-bundle and $\cL$ is a global $\o(1)$ for $\pi$.
\end{thmF1}

Every $\p^n$-bundle over a smooth curve carries a global $\o(1)$.
In general, not every $\p^n$-bundle is  a scroll,
although this is the case  locally in the \'etale topology.
(See \cite{artin} for the connection between this condition and Brauer-Severi varieties.)
So, it is natural to look for more general characterizations of $\p^n$-bundles,
without requiring the existence of a  global $\o(1)$.
We start by observing that Theorem F1 does not hold
if we drop the assumptions on the line bundle $\cL$.
This is illustrated in the following example.

\begin{exmp} \label{flat_degeneration_to_cone}
Let $\nu:\p^n\to \p^N$ be the $d$-uple embedding of $\p^n$, with $n,d\geq 2$.
Denote by $V$ the image of $\nu$ in $\p^N$, and let $\cC(V)\subset \p^{N+1}$ be the cone
over $V$ with vertex $P$.
Let $\Gamma$ be a general pencil of hyperplane sections of $\cC(V)$ in $\p^{N+1}$.
It gives rise to a projective flat morphism $\pi:X\to Y=\p^1$ whose fibers are precisely the members of $\Gamma$.
There is a unique member of $\Gamma$ that passes through $P$.
It is isomorphic to the cone over a general hyperplane sections of $V$ in $\p^{N}$.
Let $o\in Y$ be the point corresponding to this singular member of $\Gamma$.
Then $X_t\cong \p^n$ for every $t\in Y\setminus\{o\}$, while
$X_o$ is a singular cone.

We call the reader's attention to the following properties of $\pi$.
\begin{itemize}
	\item The locus of $Y$ over which the fibers are not isomorphic to $\p^n$ has codimension one in $Y$.
	\item The total space $X$ is singular.
\end{itemize}
We will see that these properties are typical for non-smooth flat deformations of $\p^n$.
\end{exmp}

The following is the key result in our study of  flat deformations of $\p^n$.
We henceforth denote the unit ball of $\c^m$ by $\Delta^m$.

\begin{thm} \label{flat_morphism_smooth_base}
Let $X$ be a complex analytic space, and $\pi:X\to \Delta^m$ a projective surjective flat morphism,
with $m\geq 2$. Suppose that $X_t\cong \p^n$ for every $t\in\Delta^m\setminus \{\bar 0\}$.
Then $X$ is smooth and $\pi$ is a scroll.
\end{thm}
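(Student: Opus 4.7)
The plan is to reduce the theorem to Fujita's characterization (Theorem~F1) by producing a $\pi$-ample line bundle $\cL$ on $X$ whose restriction to a general fiber is $\o_{\p^n}(1)$. Once such an $\cL$ is in hand, Theorem~F1 identifies $\pi$ as a $\p^n$-bundle with global $\o(1)$, and smoothness of $X$ is then automatic because $\p^n$-bundles over the smooth base $\Delta^m$ have smooth total space.

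Set $U:=\Delta^m\setminus\{\bar 0\}$. Over $U$ the morphism $\pi_U\colon X_U\to U$ is smooth and proper with general fiber $\p^n$, so Siu's rigidity theorem makes $\pi_U$ a $\p^n$-bundle. In particular $X_U$ is smooth, so $\sing(X)\subset X_{\bar 0}$, which by flatness has codimension $m\geq 2$ in $X$. Moreover, $U$ deformation retracts to $S^{2m-1}$, so it is simply connected with $H^2(U,\z)=0$; these topological vanishings will control the cohomology of $X_U$.

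Next, I would produce a global $\o(1)$ for $\pi_U$. The Leray spectral sequence, combined with $R^1\pi_{U*}\z=0$ and the trivialization of the local system $R^2\pi_{U*}\z\cong\z_U$ coming from $\pi_1(U)=0$, yields an integral class $h\in H^2(X_U,\z)$ restricting to the hyperplane class on every fiber. Using $R^q\pi_{U*}\o_{X_U}=0$ for $q>0$, one identifies $H^2(X_U,\o_{X_U})\cong H^2(U,\o_U)$, and the obstruction to lifting $h$ to $\pic(X_U)$ reduces to a Brauer-type class that must be killed using type-$(1,1)$ considerations together with divisibility constraints coming from the projectivity of $\pi$ (which supplies an auxiliary $\pi$-ample line bundle of some fiberwise degree $d$, forcing the Brauer class to be simultaneously $(n+1)$-torsion and $d$-torsion). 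This produces $\cL_U\in\pic(X_U)$ with $\cL_U|_{X_t}\cong\o_{\p^n}(1)$ for every $t\in U$.

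The final and most delicate step is to extend $\cL_U$ across $X_{\bar 0}$ to a line bundle $\cL$ on $X$. Since $X_{\bar 0}$ has codimension $\geq 2$, once one establishes normality of $X$ (using flatness of $\pi$, smoothness of $\pi_U$, and the propagation of $S_2$ through flat families over a smooth base) the line bundle $\cL_U$ extends uniquely to a reflexive rank-one sheaf on $X$; the main issue is then to upgrade this to a line bundle, which amounts to local factoriality of $X$ at points of $X_{\bar 0}$. Relative ampleness of the resulting $\cL$ propagates from that of $\cL_U$, and Theorem~F1 yields the scroll structure. The main obstacle is precisely this extension/factoriality step, where one must control both the singularities of $X$ near a potentially-singular central fiber and the global behavior of the candidate line bundle.
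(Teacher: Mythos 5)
Your overall strategy --- build a fiberwise $\o_{\p^n}(1)$ over the punctured base, extend it across the central fiber, and invoke Theorem~F1 --- founders on exactly the step you flag as ``the main obstacle,'' and that obstacle is not a technicality: it is the content of the theorem. To extend $\cL_U$ from $X\setminus\pi^{-1}(\bar 0)$ to a genuine line bundle on $X$ you ask for local factoriality of $X$ along $X_{\bar 0}$, but smoothness of $X$ is the \emph{conclusion} you are trying to reach, so the argument is circular as it stands; a reflexive rank-one extension is not enough to feed into Theorem~F1, and relative ampleness does not simply ``propagate'' to the central fiber (ampleness is an open condition on the base, not a closed one). Moreover, Theorem~F1 requires \emph{all} fibers to be irreducible and reduced, and you have no control over $X_{\bar 0}$ at the point where you want to apply it. A smaller issue: your Brauer-class argument kills a class that is both $(n+1)$-torsion and $d$-torsion, which only yields $\gcd(n+1,d)$-torsion; the paper instead shows directly in Lemma~\ref{constructing_O(1)} that $c_1:\pic(X^*)\to H^2(X^*,\z)\cong\z$ is surjective because its cokernel is finite and injects into the torsion-free group $H^2(X^*,\o_{X^*})$, and then divides $\cM^*$ by $d$ using divisibility of $\ker(c_1)$.

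The paper circumvents the extension problem entirely, and this is the idea you are missing. After reducing to $m=2$ by slicing with a $2$-plane through $\bar 0$, it pushes the $d$-th root $\cL$ down to the \emph{base} and takes $\cE=(\pi_*\cL)^{\vee\vee}$, which is automatically locally free because reflexive sheaves on a smooth surface are locally free --- no regularity of $X$ along the central fiber is needed. The smooth model $\p(\cE)$ agrees with $X$ away from $\pi^{-1}(\bar 0)$; embedding both $X$ (via $\cF=\pi_*\cM$ for a suitably positive $\pi$-ample $\cM$) and $\p(\cE)$ (via $p_*\o_{\p(\cE)}(d)$, which is isomorphic to $\cF$ because they agree off $\bar 0$ and the base is a smooth surface) into $\p(\cF)$ over $\Delta^2$, one identifies $X$ with the closure of its restriction over the punctured ball, which is $j(\p(\cE))$; flatness guarantees $X$ has no components supported over $\bar 0$, so $X\cong\p(\cE)$. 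If you want to salvage your approach, you need a device of this kind that manufactures the smooth total space from data on the base, rather than assuming enough factoriality of $X$ to extend line bundles across $X_{\bar 0}$.
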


As a consequence of Theorem~\ref{flat_morphism_smooth_base},
the singular fibers of a projective flat deformation of $\p^n$ appear either
in codimension $1$, or over rather singular points of the base.
To state this precisely, we introduce some notation.
Given a surjective morphism $\pi:X\rightarrow Y$ between  algebraic varieties,
we denote by $S_{\pi}$ the locus of points of $Y$ over which $\pi$ is not smooth.
It is a closed subset of $Y$.

\begin{cor}\label{cor}
Let $\pi:X\rightarrow Y$ be a projective surjective flat morphism between algebraic varieties
with general fiber isomorphic to $\p^n$, and fix $y\in S_{\pi}$.
Suppose that there is a surjective quasi-finite morphism from a smooth variety onto a neighborhood of
$y$ in $Y$.
Then $S_{\pi}$ has pure codimension $1$ at $y$.
\end{cor}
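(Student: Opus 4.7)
The plan is to argue by contradiction, reducing to Theorem~\ref{flat_morphism_smooth_base}. Since the general fiber of $\pi$ is $\p^n$, $S_\pi$ is a proper closed subvariety of $Y$, and every irreducible component through $y$ has codimension at least $1$. Suppose $S_\pi$ is not of pure codimension $1$ at $y$: then some irreducible component $W$ of $S_\pi$ containing $y$ has codimension $c \geq 2$ in $Y$, and I aim to show this is impossible.

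First I would move the base point within $W$ to isolate $W$ locally. The finitely many other irreducible components of $S_\pi$ through $y$ each meet $W$ in a proper closed subset (they are distinct from $W$), and the components of $S_\pi$ not through $y$ are disjoint from a small neighborhood of $y$. A general point $w \in W$ close to $y$ therefore lies on no component of $S_\pi$ except $W$; on a suitable neighborhood $U' \subset U$ of $w$ one has $S_\pi \cap U' = W \cap U'$ of pure codimension $c \geq 2$, and the quasi-finite surjection $f : Z \to U$ still covers $U'$, so the hypotheses of the corollary persist at $w$. It therefore suffices to show $w \notin S_\pi$.

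Next I would base-change along $f$: the map $\pi_Z : X_Z := X \times_U Z \to Z$ is projective, flat and surjective, with $S_{\pi_Z} = f^{-1}(S_\pi)$ of pure codimension $c$ at each $z \in f^{-1}(w)$, because $f$ is quasi-finite and surjective between varieties of the same dimension. Fixing such a $z$ and an analytic chart $(\Delta^m, \bar 0) \cong (Z, z)$, I would slice by a general linear subspace $\Delta^c \subset \Delta^m$ through $\bar 0$, so that by general position $\Delta^c \cap S_{\pi_Z} = \{\bar 0\}$ near $\bar 0$. The base-change $\pi'' : X'' \to \Delta^c$ is then projective, flat, surjective and smooth over $\Delta^c \setminus \{\bar 0\}$; since this locus is connected (as $c \geq 2$), Siu's rigidity theorem forces $X''_t \cong \p^n$ for every $t \neq \bar 0$, so Theorem~\ref{flat_morphism_smooth_base} applies and gives that $\pi''$ is a scroll, in particular smooth, whence $\bar 0 \notin S_{\pi''}$. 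But $z \in S_{\pi_Z}$ implies that the fiber of $\pi_Z$ over $z$, equal to the fiber of $\pi''$ over $\bar 0$, is singular at some point, so $\bar 0 \in S_{\pi''}$, a contradiction. The main obstacle I anticipate is the first reduction, where one must simultaneously arrange that codimension-$1$ components of $S_\pi$ drop out locally and that the quasi-finite cover hypothesis is preserved; once that is secured, the remaining argument is a clean combination of Siu's rigidity with Theorem~\ref{flat_morphism_smooth_base} via transverse slicing.
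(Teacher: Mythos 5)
Your proof is correct and follows essentially the same route as the paper: base change along the quasi-finite cover to a smooth variety and apply Theorem~\ref{flat_morphism_smooth_base}. The paper's own proof is a two-line version of this; you have simply made explicit the steps it leaves implicit (moving to a general point of a putative codimension~$\geq 2$ component, slicing by a transverse $\Delta^c$, and invoking Siu's rigidity to upgrade smoothness over the punctured ball to all fibers being $\p^n$), and these details are handled correctly.
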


Next we describe projective flat deformations of $\p^n$ with smooth total space.
In order to state our result, we introduce some more notation.

\begin{notation}
Let $\pi:X\rightarrow Y$ be a proper surjective equidimensional morphism between normal algebraic varieties.
We denote by $R_{\pi}$ the locus of points of $Y$ over which the fibers of $\pi$ are reducible.
Note that $R_{\pi}$ is a constructible set.
Indeed, let $d$ denote the relative dimension of $\pi$.
Then, for every $y\in Y$, 
$H^{2d}(X_y,\z)$ is free and its rank is the number of irreducible components of $X_y$.
On the other hand,
the sheaf $R^{2d}\pi_*\z_X$ is constructible by the proper base change theorem on \'etale cohomology
and Artin's comparison theorem (see \cite[Theorem VI.2.1]{milne}).
\end{notation}

\begin{thm} \label{X_smooth_pi_flat}
Let $X$ be a smooth complex quasi-projective variety, $Y$ a normal complex quasi-projective variety,
and $\pi:X\to Y$ a proper surjective flat morphism.
Suppose that the general fiber of $\pi$ is isomorphic to $\p^n$. Then either
\begin{enumerate}
   \item  $Y$ is smooth and  $\pi:X\to Y$ is a $\p^n$-bundle; or
   \item $S_{\pi}$ is of pure codimension $1$, and $S_{\pi}=\overline{R_{\pi}}$.
\end{enumerate}
\end{thm}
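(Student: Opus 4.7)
The proof splits on whether $S_\pi=\emptyset$. In that case $\pi$ is smooth, so by the standard criterion that a flat local extension of a regular local ring with regular fiber is itself regular, $Y$ is smooth; Siu's rigidity theorem then forces $\pi$ to be a $\p^n$-bundle, giving conclusion~(1). So assume henceforth $S_\pi\neq\emptyset$, and let us establish the two statements in~(2).

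For the pure-codimension-1 claim, the plan is to apply Corollary~\ref{cor}, which requires at each $y\in S_\pi$ a quasi-finite surjective morphism from a smooth variety onto a neighborhood of $y$. I will produce one globally and geometrically. Since $\pi$ is projective, fix a $\pi$-ample line bundle $\cL$ on $X$; after twisting by the pullback of an ample line bundle on $Y$ and passing to a sufficiently high power, Bertini on the smooth $X$ produces $n$ general members $D_1,\ldots,D_n\in|\cL|$ whose intersection $V:=D_1\cap\cdots\cap D_n$ is smooth of dimension $\dim Y$. The restriction $\pi|_V\colon V\to Y$ is proper, with zero-dimensional nonempty fibers (the intersection of $n$ ample divisors on the $n$-dimensional $X_y$ is finite and nonempty by positivity), hence finite and surjective. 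Corollary~\ref{cor} then yields that $S_\pi$ has pure codimension~1 at every point.

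For $S_\pi=\overline{R_\pi}$, the inclusion $R_\pi\subset S_\pi$ follows because a smooth proper morphism with connected general fiber has connected, hence (being smooth) irreducible, fibers. Conversely, fix an irreducible component $Z$ of $S_\pi$: by the above $\codim_Y Z=1$, and since $Y$ is normal, $\sing Y$ has codimension $\geq 2$, so the generic point of $Z$ lies in the smooth locus of $Y$. Pick $y\in Z$ very general (so $Y$ and $Z$ are smooth at $y$ and $y$ lies in no other component of $S_\pi$), take an analytic chart $U\cong\Delta^m$ centered at $y$, and a one-dimensional disk $C\subset U$ transverse to $Z$ at $y$. Suppose for contradiction that $X_y$ is irreducible. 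A local argument exploiting the fact that $d\pi$ has generic corank~1 along $Z$ makes $X_C:=\pi^{-1}(C)$ smooth near $X_y$, so $X_y$ is a prime reduced Cartier divisor in $X_C$. Over the punctured disk $C\setminus\{y\}$, the restricted family is a smooth $\p^n$-bundle admitting a relative $\o(1)$ (Brauer--Severi obstructions vanish since $C\setminus\{y\}$ is Stein of dimension~1), and this $\o(1)$ extends to a line bundle $\cM$ on $X_C$ by factoriality of the smooth $X_C$. Provided $\cM$ is $\pi|_{X_C}$-ample, Fujita's Theorem~F1 applies to $(\pi|_{X_C},\cM)$ and forces $\pi|_{X_C}$ to be a $\p^n$-bundle; thus $X_y\cong\p^n$, contradicting $y\in S_\pi$. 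Hence $X_y$ is reducible, so $y\in R_\pi$, and $Z\subset\overline{R_\pi}$, yielding $S_\pi\subset\overline{R_\pi}$.

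The principal obstacles lie in the third paragraph: verifying the smoothness of $X_C$ near $X_y$ via a careful analysis of the corank of $d\pi$ along $Z$ and of the tangent direction of $C$ in $T_yY$, and ensuring $\pi|_{X_C}$-ampleness of the extension $\cM$ (equivalently, ampleness of $\cM|_{X_y}$), which is a nontrivial statement about how the relative $\o(1)$ from the punctured disk extends across the potentially singular fiber $X_y$.
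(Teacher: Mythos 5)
Your handling of the case $S_\pi=\emptyset$ and of the pure-codimension-$1$ claim is essentially the paper's own argument: the smooth complete intersection $Z=D_1\cap\dots\cap D_n$ of general very ample divisors on $X$, finite and surjective onto a neighborhood in $Y$, is exactly the device the paper uses to invoke Corollary~\ref{cor} (and, when $S_\pi=\emptyset$, to see that $Y$ is smooth). The problems are concentrated in your third paragraph, where the two ``principal obstacles'' you flag are genuine gaps, and there is a third one you do not flag.

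First, smoothness of $X_C$. With a local transverse disk $C$ at a very general point of $Z$, smoothness of $\pi^{-1}(C)$ is not automatic: it requires that the images of $d\pi_x$ for $x\in X_y$ avoid a particular position relative to $T_yC$, and your ``local argument exploiting the corank of $d\pi$'' is not supplied. The paper sidesteps this entirely by taking $C\subset Y$ to be a complete intersection of general very ample divisors of $Y$; then $X_C=\pi^{-1}(C)$ is a general complete intersection in a base-point-free linear system on the smooth $X$, hence smooth by Bertini, while $C$ still meets every component of $S_\pi$ at general points. Second, and this is the key missing idea, relative ampleness of the extended line bundle: the paper works over $U:=C\setminus\overline{R_\pi}$, where by definition \emph{every} fiber is irreducible, so the relative Picard number $\rho(X_U/U)$ equals $1$; any extension $\cL$ of the relative $\o(1)$ is then automatically $\pi$-ample over $U$, with no need to analyze how $\cL$ restricts to the possibly singular fiber. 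Third, reducedness: Theorem~F1 requires fibers that are irreducible \emph{and reduced}, and your assertion that $X_y$ is a ``prime reduced'' Cartier divisor is unjustified--irreducibility of the support does not exclude a multiple fiber. The paper rules this out by noting that $c_1(\cL)^n\cdot X_t=1$ is constant on $C$ by flatness, so once $\cL$ is known to be relatively ample a generically non-reduced fiber would have degree at least $2$. With these inputs Theorem~F1 applies over all of $U$ at once, giving $S_\pi\cap C\subset\overline{R_\pi}$ directly; as written, your pointwise contradiction argument at the single fiber $X_y$ cannot be completed.
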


The second case described in Theorem~\ref{X_smooth_pi_flat}
is exemplified by suitable blow-ups of $\p^n$-bundles.

It is also useful to have characterizations of $\p^n$-bundles without flatness assumptions.
These can be obtained by applying our flatness criteria discussed in Section~\ref{section:flatness}.

\bigskip

\noindent {\bf Notation.}
Our ground field is always $\c$.

Let $\pi:X\to Y$ be a morphism of complex analytic spaces.
Given $t\in Y$, we denote by $X_t$ the scheme-theoretical fiber over $t$.
We refer to the reduced scheme $(X_t)_{red}$ as the set-theoretical fiber over $t$.

Given a locally free sheaf $\cE$ on a complex analytic space $X$, we denote by
$\p(\cE)$  the Grothendieck projectivization $\Proj_Y(\sym (\cE))$.

Varieties are always assumed to be irreducible and reduced. 

%%%%%%%%%%%%%%%%%%%%%%%%%%%%%%%%%%%%%%%%%%%%%%%%%%%%%%%%%
%                                                       %
%                     SECTION 2                         %
%                                                       %
%%%%%%%%%%%%%%%%%%%%%%%%%%%%%%%%%%%%%%%%%%%%%%%%%%%%%%%%%

\section{Proofs}

In order to characterize $\p^n$-bundles that are not necessarily scrolls,
we will use the following lemma to construct ``local $\o(1)$'s''.
This result follows from the proof of  \cite[Lemma 3.3]{andreatta_mella97}.
We reprove it here for the reader's convenience.

\begin{lemma} \label{constructing_O(1)}
Let $X$ and $U$ be complex analytic spaces, with
$U\setminus\{o\}\cong \Delta^m\setminus \{\bar 0\}$ for some point $o\in U$ and some $m\geq 2$.
Let $\pi:X\to U$ be a surjective projective morphism whose restriction to
$X\setminus \pi^{-1}(o)$ is a $\p^n$-bundle, and assume that $\codim_X\big(\pi^{-1}(o)\big)\geq 2$.
Let $\cM$ be a $\pi$-ample line bundle on $X$, and let $d\in \z_{>0}$ be such that
$\cM|_{X_t}\cong \o_{\p^n}(d)$ for every $t\in U\setminus\{o\}$.
Then there exists a coherent sheaf $\cL$ on $X$ such that $\cL|_{X\setminus \pi^{-1}(o)}$
is invertible and $\big(\cL|_{X\setminus \pi^{-1}(o)}\big)^{\otimes d}\cong\cM|_{X\setminus \pi^{-1}(o)}$.
If, moreover, $X$ is smooth, then there is a line bundle $\cL$ on $X$ such that $\cL^{\otimes d}\cong \cM$.
\end{lemma}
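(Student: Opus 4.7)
I will first construct $\cL$ as an honest line bundle on $X':=X\setminus\pi^{-1}(o)$ by gluing local $\o(1)$'s, and only afterwards extend across $\pi^{-1}(o)$. Set $U':=U\setminus\{o\}\cong\Delta^m\setminus\{\bar 0\}$ and cover $U'$ by open convex balls $V_i\subset\c^m$; each $V_i$ and each iterated intersection $V_{i_1\cdots i_r}$ is then a contractible Stein space, hence has vanishing Picard group and admits holomorphic $d$-th roots of units (via $\log$). By Grauert's Oka principle, every holomorphic $\p^n$-bundle over such a contractible Stein space is holomorphically trivial, so I may fix trivializations $X'|_{V_i}\cong\p^n\times V_i$ together with relative hyperplane bundles $\o(1)_i$. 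Because $\pic(V_i)=0$ and $\cM|_{X'|_{V_i}}$ has fiber degree $d$, a K\"unneth-type computation yields $\cM|_{X'|_{V_i}}\cong\o(1)_i^{\otimes d}$, and I choose isomorphisms $\epsilon_i\colon\cM|_{X'|_{V_i}}\xrightarrow{\sim}\o(1)_i^{\otimes d}$.

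\textbf{Compatible transitions and obstruction cocycle.} On each overlap $V_{ij}$, $\o(1)_i\otimes\o(1)_j^{-1}$ has fiber degree zero and descends to a line bundle on $V_{ij}$, which is trivial because $\pic(V_{ij})=0$. I want to upgrade this to an isomorphism $\phi_{ij}\colon\o(1)_i\xrightarrow{\sim}\o(1)_j$ that moreover satisfies $\phi_{ij}^{\otimes d}=\epsilon_j\circ\epsilon_i^{-1}$. Given any provisional isomorphism $\psi$, the discrepancy $\epsilon_j\epsilon_i^{-1}\psi^{-d}$ is a holomorphic unit on $V_{ij}$, and since $V_{ij}$ is simply connected it admits a holomorphic $d$-th root; rescaling $\psi$ by that root produces $\phi_{ij}$, unique up to multiplication by a locally constant $d$-th root of unity. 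On triple overlaps the composite $\phi_{ki}\phi_{jk}\phi_{ij}$ is an automorphism of $\o(1)_i$, hence multiplication by some unit $\lambda_{ijk}$, and taking $d$-th powers forces $\lambda_{ijk}^d=1$. Thus $\lambda_{ijk}\in\mu_d(V_{ijk})$ and the whole gluing obstruction is a Cech $2$-cocycle with values in the constant sheaf $\mu_d$.

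\textbf{Vanishing, gluing, and extension.} Since every $V_{i_1\cdots i_r}$ is contractible, the cover is Leray for $\mu_d$, so $[\lambda_{ijk}]$ lies in $H^2(U',\mu_d)$. Because $U'$ is homotopy equivalent to $S^{2m-1}$ with $m\geq 2$, this group is zero. Writing $\lambda_{ijk}$ as a coboundary and multiplying each $\phi_{ij}$ by the appropriate root of unity (which leaves $\phi_{ij}^{\otimes d}$ unchanged) turns $\{\phi_{ij}\}$ into a genuine cocycle; the $\o(1)_i$'s glue to a line bundle $\cL$ on $X'$, and the $\epsilon_i$'s glue to an isomorphism $\cL^{\otimes d}\cong\cM|_{X'}$. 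For the coherent extension to $X$, I push forward along $j\colon X'\hookrightarrow X$: the sheaf $j_*\cL$ is coherent because $\pi^{-1}(o)$ has codimension at least two. When $X$ is smooth, $j_*\cL$ is reflexive of rank one and hence a line bundle, and the isomorphism $\cL^{\otimes d}\cong\cM|_{X'}$ extends to $(j_*\cL)^{\otimes d}\cong\cM$ on all of $X$ by Hartogs in codimension at least two on a smooth space.

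\textbf{Main obstacle.} The crux will be arranging the local isomorphisms so that the gluing obstruction lands in $\mu_d$ rather than in $\o^*$: it is precisely the hypothesis $\cM|_{X_t}\cong\o(d)$ that forces the cocycle into $\mu_d$, and it is precisely the topology $U'\simeq S^{2m-1}$, $m\geq 2$, that kills the resulting class. Without this reduction the analytic Brauer group of $U'$ could be nonzero, and no such $\cL$ would exist.
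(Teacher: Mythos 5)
Your proof is correct, but it takes a genuinely different route from the paper's. The paper works globally on $X^*=X\setminus\pi^{-1}(o)$: it runs the Leray spectral sequence for $\pi|_{X^*}$ and the classes $c_1(\cM)^k$ (Leray--Hirsch) to obtain $H^*(X^*,\z)\cong H^*(U^*,\z)\otimes H^*(\p^n,\z)$, hence $H^2(X^*,\z)\cong\z$; it then uses the exponential sequence to show that $c_1:\pic(X^*)\to\z$ is surjective (its cokernel is finite and injects into the torsion-free group $H^2(X^*,\o_{X^*})$) and has divisible kernel, so $\cM^*$, having $c_1=d$, admits a $d$-th root. You instead trivialize the bundle over a good Stein cover of $U^*$ (via Fischer--Grauert and Grauert's Oka principle), choose local $d$-th roots of $\cM$, and identify the obstruction to gluing them as a \v{C}ech class in $H^2(U^*,\mu_d)$, which vanishes because $U^*\simeq S^{2m-1}$ with $m\geq 2$. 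The two arguments rest on the same topological input (vanishing of low-degree cohomology of $S^{2m-1}$) and share the same endgame across the codimension-$\geq 2$ set $\pi^{-1}(o)$: your coherence of $j_*\cL$ is the paper's appeal to Serre's extension theorem, and your reflexive-hull argument in the smooth case is the paper's isomorphism $\pic(X)\cong\pic(X^*)$. What your version buys is an explicit, elementary description of the obstruction as a $\mu_d$-valued $2$-cocycle (essentially the Brauer-type obstruction you name at the end), at the cost of needing holomorphic local trivializations and a careful bookkeeping of the $\epsilon_i$ and $\phi_{ij}$; the paper's version avoids local trivializations entirely and compresses everything into one Chern-class computation, which also yields the slightly stronger fact that $c_1:\pic(X^*)\to\z$ is onto. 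Both treatments are equally terse about why the coherent extension across $\pi^{-1}(o)$ exists, so no points lost there.
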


\begin{proof}
Set $U^*=U\setminus\{o\}$, $X^*=X\setminus \pi^{-1}(o)$, and $\cM^*=\cM|_{X^*}$.
We will apply Leray spectral sequence to the morphism $\pi|_{X^*}:X^*\to U^*$ and the
locally constant sheaf $\z_{X^*}$.
Set $E_2^{p,q}:=H^p\big(U^*, R^q(\pi|_{X^*})_*\z_{X^*}\big)$, and denote the differentials of the corresponding spectral sequence by $d_r^{p,q}$.
The cohomology classes $c_1({\mathcal M})^k$ yield the Leray-Hirsch theorem for $\Q_{X^*}$ (see \cite[Theorem 7.33]{voisin1}
and its proof, or \cite[p.192-3]{bott_tu}).
Hence, $d_r^{p,q}\otimes \Q=0$ for $r\geq 2$, and abutment at $E_2$ follows by recursion on $r$, using that the $E_2^{p,q}$'s are free abelian groups.
So there is an isomorphism of $H^*(U^*,\z)$-algebras
$$
H^*(X^*,\z) \cong H^*(U^*,\z) \otimes H^*(\p^n,\z).
$$

In particular, $H^2(X^*,\z)\cong \z$, and the cokernel of the composed Chern class map
$$
c_1: \  \pic(X^*) \ \to \ H^2(X^*,\z)\ \cong \ \z
$$
is finite. On the other hand, this cokernel injects into $H^2(X^*,{\mathcal O}_{X^*})$, which is torsion-free.
Hence $c_1$ is surjective.

Note that $c_1(\cM^*)=d$. Since the kernel of $c_1$ is divisible, there is a line bundle $\cL^*$
on $X^*$ such that $(\cL^*)^{\otimes d}\cong\cM^*$.
We take $\cL$ to be a coherent sheaf on $X$ extending $\cL^*$
(see \cite[Theorem 1]{SerreNorm}).
If $X$ is smooth, then, since $\codim_X(X\setminus X^*)\geq 2$, there is an isomorphism $\pic(X^*)\cong \pic(X)$.
Let $\cL\in \pic(X)$ correspond to $\cL^*\in \pic(X^*)$.
Then   $\cL^{\otimes d}\cong \cM$.
\end{proof}

\begin{proof}[Proof of Theorem~\ref{flat_morphism_smooth_base}]
In order to show that $\pi$ is a $\p^n$-bundle,
it suffices to prove that the scheme-theoretical fiber $X_{\bar 0}$ over $\bar 0\in \Delta^m$ is isomorphic to $\p^n$.
By intersecting $\Delta^m$ with a $2$-plane passing through $\bar 0$, we may assume that $m=2$.

Let $\cM$ be a $\pi$-ample line bundle on $X$, and
let $d\in \z$ be such that $\cM|_{X_t}\cong \o_{\p^n}(d)$ for every $t\in\Delta^2 \setminus \{\bar 0\}$.
The restriction $\cM|_{X_{\bar 0}}$ is ample.
Hence, by replacing $\cM$ with a sufficiently high tensor power  if necessary, we may assume that $\cM|_{X_t}$ is very ample and that $H^i(X_t,\cM|_{X_t})=0$
for every $t\in \Delta^2$ and $i>0$.
Since $\pi$ is flat, $\chi(X_t,\cM|_{X_t})$ is constant on $t\in \Delta^2$,
and hence so is $h^0(X_t,\cM|_{X_t})$.
Set $\cF= \pi_*\cM$. Then $\cF$ is locally free and
the natural map $\pi^*\cF\to \cM$ is surjective
(see for example \cite[Theorem 1.4]{ueno_LNM439}).
This yields an embedding $i:X\into \p(\cF)$ over $\Delta^2$.

By Lemma~\ref{constructing_O(1)}, there is a coherent sheaf $\cL$ on $X$ such that
$\cL|_{X\setminus \pi^{-1}(\bar 0)}$ is invertible and
$\big(\cL|_{X\setminus \pi^{-1}(\bar 0)}\big)^{\otimes d}\cong\cM|_{X\setminus \pi^{-1}(\bar 0)}$.
Set $\cE=(\pi_*\cL)^{\vee\vee}$. Since $\Delta^2$ is smooth and two-dimensional,
$\cE$ is locally free (see for example \cite[II. Lemma 1.1.10]{OSS_vector_bdles_on_Pn}).
Consider the projectivization $p:\p(\cE)\to \Delta^2$, with tautological line bundle $\o_{\p(\cE)}(1)$.
There is an isomorphism $\varphi: \p(\cE)\setminus p^{-1}(\bar 0)\xrightarrow{\cong} X\setminus  \pi^{-1}(\bar 0)$
such that $\varphi^*\big(\cM|_{X\setminus  \pi^{-1}(\bar 0)}\big)=\o_{\p(\cE)}(d)\big|_{\p(\cE)\setminus p^{-1}(\bar 0)}$.

Note that $p_*\o_{\p(\cE)}(d)$ is locally free and that the natural map $p^*p_*\o_{\p(\cE)}(d)\to \o_{\p(\cE)}(d)$
is surjective. The corresponding morphism $\p(\cE)\to \p\big(p_*\o_{\p(\cE)}(d)\big)$
is an embedding over $\Delta^2$, which restricts to the $d$-uple embedding of $\p^n$ on each fiber of $p$.
By construction, the locally free sheaves $p_*\o_{\p(\cE)}(d)$ and $\cF$ are isomorphic over $\Delta^2 \setminus \{\bar 0\}$,
hence isomorphic over $\Delta^2$.
Thus, there is an embedding $j:\p(\cE)\into \p(\cF)$ over $\Delta^2$ such that
$j=i\circ \varphi$ on $\p(\cE)\setminus p^{-1}(\bar 0)$.
It follows that the closure of $i\big(X\setminus \pi^{-1}(\bar 0)\big)$ in $\p(\cE)$ is
$j(\p(\cE))$, and thus $X\cong \p(\cE)$.

Once we know that $\pi$ is a $\p^n$-bundle, we use Lemma~\ref{constructing_O(1)} to construct a
global $\o(1)$ for $\pi$.
\end{proof}

\begin{proof}[Proof of Corollary~\ref{cor}]
Let $U$ be a smooth variety, and $U\to V$ a surjective quasi-finite morphism onto an open subset $V\subset Y$.
Theorem ~\ref{flat_morphism_smooth_base} applied to the induced flat morphism $X\times_Y U\to U$
shows that $S_\pi$ has pure codimension $1$ on $V$.
\end{proof}

\begin{proof}[Proof of Theorem~\ref{X_smooth_pi_flat}]
First we prove that $S_\pi$ is either empty or has pure codimension $1$.
Given $y\in S_\pi$, let $Z\subset X$ be a complete intersection of $n$ general very ample divisors on $X$.
Then $Z$ is smooth by Bertini's Theorem, and $\pi|_Z:Z\to Y$ is finite over a neighborhood of $y$.
Our claim follows from Corollary~\ref{cor}.

If  $S_\pi= \emptyset$, then $Y$ is smooth. Indeed, given $y\in Y$, 
by Bertini's Theorem, we can take $Z$ as above such that  $\pi|_Z:Z\to Y$ 
is unramified over $y$.

Next, we show that $R_\pi$ is Zariski dense in $S_\pi$.
Let $C\subset Y$ be a smooth curve obtained as complete intersection of general very ample divisors on $Y$.
Then $X_C=\pi^{-1}(C)$ is smooth by Bertini's Theorem, and $C$ intersects every irreducible component of $S_\pi$ at general points.
Set $U:=C\setminus \overline{R_\pi}$ and $X_U:= \pi^{-1}(U)$.
The relative Picard number $\rho(X_U/U)$ equals $1$, since 
every fiber of $\pi$ over $U$ is irreducible.
Let $V\subset U$ be an open subset over which $\pi$ is a $\p^n$-bundle.
Since $\dim V=1$, every $\p^n$-bundle over $V$ is a scroll.
Hence there is a line bundle $\cL_V$  on $\pi^{-1}(V)$ such that
$\cL_V|_{X_t}\cong \o_{\p^n}(1)$ for every $t\in V$.
We can extend $\cL_V$ to a line bundle $\cL$ on $X_C$.
The restriction of $\cL$ to $X_U$ is $\pi$-ample since $\rho(X_U/U)=1$.
Since $c_1(\cL)^n\cdot X_t=1$ for every $t\in C$, all fibers of $\pi$ over $U$ are reduced.
Theorem F1 then implies that $\pi|_{X_U}:X_U\to U$ is a $\p^n$-bundle.
This shows that $\overline{R_\pi}=S_\pi$.
\end{proof}

%%%%%%%%%%%%%%%%%%%%%%%%%%%%%%%%%%%%%%%%%%%%%%%%%%%%%%%%%
%                                                       %
%                     SECTION 3                         %
%                                                       %
%%%%%%%%%%%%%%%%%%%%%%%%%%%%%%%%%%%%%%%%%%%%%%%%%%%%%%%%%

\section{Flatness criteria} \label{section:flatness}

It is often useful to have characterizations of $\p^n$-bundles without flatness assumptions.
In this context, we recall the following result of Fujita.

\begin{thmF2}[{\cite[Lemma 2.12]{fujita85}}]
Let $X$ be a smooth complex projective variety, $Y$ a normal complex projective variety,
and $\pi:X\to Y$ a surjective equidimensional  morphism.
Let $\cL$ be an ample line bundle on $X$, and
suppose that $\big(X_t,\cL|_{X_t}\big)\cong \big(\p^n,\o_{\p^n}(1)\big)$ for general $t\in Y$.
Then $Y$ is smooth, $\pi$ is a $\p^n$-bundle, and $\cL$ is a global $\o(1)$ for $\pi$.
\end{thmF2}

One reduces Theorem F2 to Theorem F1 by applying the following flatness criterion.

\begin{criterion}[{\cite[6.1.5]{EGA4}}] \label{criterion}
An equidimensional proper morphism $\pi:X\to Y$ of algebraic varieties is flat,
provided that $Y$ is smooth and $X$ is locally Cohen-Macaulay.
\end{criterion}

\begin{rem}\label{flat<->CM}
Let $\pi:X\to Y$ be a finite flat morphism of algebraic varieties.
Suppose that $Y$ is locally Cohen-Macaulay at a point $y=\pi(x)$.
We claim that $X$ is locally Cohen-Macaulay at  $x$.
Indeed, by \cite[Corollary 18.17]{eisen}, this is the case if $Y$ is
smooth at $y$. The general case can be reduced to this one by applying Noether normalization
theorem to $Y$ and observing Criterion~\ref{criterion}.
\end{rem}

More refined flatness criteria can be found in \cite{kollar_flatness} and \cite{kollar_husks}.
The problem is more delicate under the presence of everywhere nonreduced fibers.
The next example illustrates this situation.

\begin{exmp}\label{examplesing}
Let $\sigma$ be an involution of $\p^n$.
Consider the diagonal action of $\mu_2$ on $\p^n\times \A^2$, where
the action on $\p^n$ is given by $\sigma$, and
the action on $\A^2$ is given by the antipodal map.
Set $X:=(\p^n\times \A^2)/\mu_2$, $Y:=\A^2/\mu_2$, and denote by $o\in Y$ the unique singular point of $Y$.
The actions induce a proper equidimensional morphism $\pi:X\to Y$ such that $X_t\cong \p^n$ for $t\in Y\setminus\{o\}$,
while $X_o$ is not generically reduced. Moreover  $(X_o)_{red}\cong \p^n/\mu_2$.
Note that $\pi:X\to Y$ is not flat. This can be seen by considering the induced morphism
$X\times_Y\A^2 \to \A^2$ and applying Theorem~\ref{flat_morphism_smooth_base}.
\end{exmp}

We end the paper with some flatness criteria.

\begin{prop}\label{prop:flatness}
Let $\pi:X\rightarrow Y$ be a projective surjective equidimensional morphism between normal algebraic varieties.
Then $\pi$ is flat at $x\in X$, provided that one of the following conditions holds.
\begin{enumerate}
	\item  $\pi$ has connected fibers, $X$ is smooth at $x$, and there is a  surjective quasi-finite morphism from a smooth variety
		onto a neighborhood of $\pi(x)$ in $Y$ that is  \'etale in codimension $1$.
	\item $X$ is locally Cohen-Macaulay at $x$, and there is a finite flat morphism from a smooth variety
		onto a neighborhood of $\pi(x)$ in $Y$.
\end{enumerate}
\end{prop}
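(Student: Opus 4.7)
The plan is to handle the two cases separately; both reduce to Criterion~\ref{criterion} via base change along the auxiliary morphism from a smooth variety. For part (2), let $g:W\to V$ denote the finite flat morphism from a smooth variety $W$ onto a neighborhood $V$ of $y:=\pi(x)$, and shrink $Y$ so that $V=Y$. Form the fiber product $X_W:=X\times_Y W$ with projections $h:X_W\to X$ and $\pi_W:X_W\to W$. As a base change of $g$, the map $h$ is finite and flat, so Remark~\ref{flat<->CM} applied to $h$ (using that $X$ is locally Cohen--Macaulay at $x$) shows that $X_W$ is locally Cohen--Macaulay at every preimage $x'$ of $x$. The map $\pi_W$ is projective and equidimensional, and $W$ is smooth, so Criterion~\ref{criterion} yields flatness of $\pi_W$ at $x'$. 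Since $g$ is faithfully flat, flatness of $\pi_W$ at $x'$ descends to flatness of $\pi$ at $x$.

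For part (1), the auxiliary morphism $g:W\to V$ is only quasi-finite, hence not faithfully flat, and the quick strategy of part (2) does not apply. The first step is to extract what the \'etale-in-codimension-$1$ hypothesis says about $Y$ near $y$. By Zariski's Main Theorem, after shrinking $V$, we may factor $g=\bar g\circ j$ with $j:W\hookrightarrow\bar W$ an open immersion and $\bar g:\bar W\to V$ finite. Since $g$ is finite at every codimension-$1$ point of $V$, the image $\Sigma:=\bar g(\bar W\setminus W)\subset V$ has codimension $\geq 2$. After normalizing $\bar W$, Zariski--Nagata purity of the branch locus applied to $\bar g$ over $V\setminus\Sigma$ shows that $\bar g$ is \'etale there; since $W$ is smooth, this forces $V\setminus\Sigma$ to be smooth. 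If $y\notin\Sigma$ then $Y$ is smooth at $y$, $X$ is Cohen--Macaulay at $x$, and Criterion~\ref{criterion} immediately yields flatness of $\pi$ at $x$.

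The remaining case $y\in\Sigma$ is the main obstacle, and here the connected-fibers hypothesis must be brought to bear. I would base change by the finite cover $\bar g:\bar W\to V$, obtaining $\bar\pi_W:X\times_Y\bar W\to\bar W$, projective and equidimensional with connected fibers. The \'etaleness of $\bar g$ outside the codimension $\geq 2$ locus $\Sigma$, combined with smoothness of $X$ at $x$, makes $X\times_Y\bar W$ smooth in codimension $1$ near preimages of $x$; then the connected-fibers hypothesis (which in particular implies $\bar\pi_{W,*}\o_{X\times_Y\bar W}=\o_{\bar W}$ locally) is used to propagate the Cohen--Macaulay property from codimension $1$ to a chosen preimage $x'$ of $x$. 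Once $X\times_Y\bar W$ is Cohen--Macaulay at $x'$, Criterion~\ref{criterion} gives flatness of $\bar\pi_W$ at $x'$, and faithfully flat descent along $\bar g$ finishes the proof exactly as in part (2). The main obstacle is precisely this last propagation of the Cohen--Macaulay property across the codimension $\geq 2$ locus $\Sigma$, which is where the connected-fibers hypothesis is indispensable.
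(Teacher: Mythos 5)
Your argument for part (2) is essentially the paper's: base change along the finite flat cover, invoke Remark~\ref{flat<->CM} and Criterion~\ref{criterion}, and descend. That half is fine.

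Part (1), however, has a genuine gap, and the step you flag as ``the main obstacle'' is indeed not an argument. First, two of your intermediate claims fail. The set $\Sigma=\bar g(\bar W\setminus W)$ need not have codimension $\geq 2$: ``\'etale in codimension $1$'' controls where $g$ is \'etale, not where it is finite, and an \'etale quasi-finite surjection can easily fail to be finite over a divisor. More seriously, purity of the branch locus cannot be applied to $\bar g:\bar W\to V$ to conclude that $\bar g$ is \'etale and $V$ is smooth: Zariski--Nagata purity requires the \emph{target} to be regular, and here $V$ is only normal. The quotient map $\A^2\to\A^2/\mu_2$ is finite, surjective, \'etale in codimension $1$, with smooth source and normal target, yet it is not \'etale and the target is singular --- and this is exactly the intended range of application of condition (1) (see the Remark following the Proposition, which mentions quotient singularities). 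Finally, there is no mechanism for ``propagating the Cohen--Macaulay property from codimension $1$'' across $\Sigma$; Serre's criterion upgrades $R_1+S_2$ to normality, but nothing of this kind produces Cohen--Macaulayness, so the case you isolate as the hard one is left unproved.

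The missing idea is to apply purity \emph{upstairs}, where a regular target is available. Set $X':=X\times_Y U$ with projections $\varphi:X'\to X$ and $\pi':X'\to U$. The connected-fibers hypothesis (together with normality of $X$ and of the general fiber) makes $X'$ irreducible; $\varphi$ is quasi-finite, and since $g$ is \'etale away from a closed subset of codimension $\geq 2$ in $V$ and $\pi$ is equidimensional, $\varphi$ is \'etale in codimension $1$ over the smooth point $x$ of $X$. Purity of the branch locus, now legitimately applied because the target $X$ is smooth at $x$, shows $\varphi$ is \'etale near any $x'$ over $x$, so $X'$ is smooth there; Criterion~\ref{criterion} gives flatness of $\pi'$ at $x'$, which descends to flatness of $\pi$ at $x$. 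Note that in this arrangement the connected-fibers hypothesis is used only to guarantee irreducibility of $X'$, not for any cohomological propagation.
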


\begin{rem}
The last part of condition (1) above holds for example
if the fiber through $x$ has a generically reduced irreducible component, or if $\pi(x)$ is a quotient singularity of $Y$.
\end{rem}

\begin{proof}
Assume (1) holds.
Let $U\to V$ be a  surjective quasi-finite morphism from a smooth variety
onto a neighborhood of $\pi(x)$, \'etale in codimension $1$.
Set $X':=X\times_Y U$, and note that it is irreducible since $\pi$ has connected fibers and $X$ is normal (and so is the general fiber of $\pi$).
The induced morphism $\varphi:X'\to X$ is quasi-finite and \'etale in codimension $1$.
By purity of the branch locus (see \cite{AK}), we conclude that $\varphi$ is \'etale, and thus $X'$ is smooth at any point $x'$ over $x\in X$.
The induced morphism $\pi':X'\to U$ is then flat at $x'$  by Criterion~\ref{criterion}, hence $\pi$ is flat at $x$
(see \cite[Theorem III.9.9]{hartshorne}).

Assume (2) holds.
Let $U\to V$ be a finite flat morphism from a smooth variety onto a neighborhood of $\pi(x)$.
Set $X':=X\times_Y U$.
By Remark~\ref{flat<->CM}, $X'$ is locally Cohen-Macaulay at any point $x'$ over $x\in X$.
The induced morphism $\pi':X'\to U$ is then flat at $x'$ by  Criterion~\ref{criterion},  hence $\pi$ is flat at $x$.
\end{proof}

\begin{rem}
Theorem F2 can be generalized to normal varieties $X$ (see \cite[Proposition 4.10]{araujo_druel}).
Other characterizations of scrolls in a similar vein appear in \cite{hoering}.
\end{rem}

\bibliographystyle{amsalpha}
\bibliography{calorina}

\end{document}